\DeclareMathOperator{\ch}{char}
\DeclareMathOperator{\card}{card}
\def\frk{\frak}               % font for "Fraktur"
\def\mm{{\frk m}}
\def\Phi{{\frk n}}
\def\Phi{{\frk N}}
\def\opn#1#2{\def#1{\operatorname{#2}}} % to make operators
\opn\projdim{proj\,dim} \opn\injdim{inj\,dim} \opn\rank{rank}
\opn\depth{depth} \opn\sdepth{sdepth} \opn\fdepth{fdepth}
\opn\grade{grade} \opn\height{height} \opn\embdim{emb\,dim}
\opn\codim{codim}  \opn\min{min} \opn\max{max}
\opn\Tr{Tr} \opn\bigrank{big\,rank}
\opn\superheight{superheight}\opn\lcm{lcm}
\opn\trdeg{tr\,deg}%\emph{
\opn\reg{reg} \opn\lreg{lreg} \opn\ini{in} \opn\lpd{lpd}
\opn\size{size}
\opn\div{div} \opn\Div{Div} \opn\cl{cl} \opn\Cl{Cl}
\opn\Spec{Spec} \opn\Supp{Supp} \opn\supp{supp} \opn\Sing{Sing}
\opn\Ass{Ass} \opn\Min{Min}
\opn\Ann{Ann} \opn\Rad{Rad} \opn\Soc{Soc}
\opn\Im{Im} \opn\Ker{Ker} \opn\Coker{Coker} \opn\Am{Am}
\opn\Hom{Hom} \opn\Tor{Tor} \opn\Ext{Ext} \opn\End{End}
\opn\Aut{Aut} \opn\id{id}  \opn\deg{deg}
\opn\nat{nat}
\opn\pff{pf}%   \pf exists already
\opn\Pf{Pf} \opn\GL{GL} \opn\SL{SL} \opn\mod{mod} \opn\ord{ord}
\opn\Gin{Gin} \opn\Hilb{Hilb}
\opn\aff{aff} \opn\con{conv} \opn\relint{relint} \opn\st{st}
\opn\lk{lk} \opn\cn{cn} \opn\core{core} \opn\vol{vol}
\opn\link{link} \opn\star{star}
\opn\gr{gr}
\def\pot#1#2{#1[\kern-0.28ex[#2]\kern-0.28ex]}
\opn\dirlim{\underrightarrow{\lim}}
\opn\inivlim{\underleftarrow{\lim}}
\let\iso=\cong
\let\to=\rightarrow
\def\Implies{\ifmmode\Longrightarrow \else
        \unskip${}\Longrightarrow{}$\ignorespaces\fi}
\def\implies{\ifmmode\Rightarrow \else
        \unskip${}\Rightarrow{}$\ignorespaces\fi}
\def\iff{\ifmmode\Longleftrightarrow \else
        \unskip${}\Longleftrightarrow{}$\ignorespaces\fi}
\newtheorem{Theorem}{Theorem}[]
\newtheorem{Lemma}[Theorem]{Lemma}
\newtheorem{Corollary}[Theorem]{Corollary}
\newtheorem{Proposition}[Theorem]{Proposition}
\theoremstyle{definition}
\newtheorem{Remark}[Theorem]{Remark}
\newtheoremstyle{subsection-tweak}
   {11pt}
   {3pt}%
   {}
   {}%
   {\bfseries}
   {}%
   {.5em}
   {\thmnumber{\@{#1}{}\@{#2}.}%
    \thmnote{~{\bfseries#3.}}}    
\newcounter{numberingbase}
\theoremstyle{subsection-tweak}
\newtheorem{bpp}[Theorem]{}
\newtheorem{bppt}[numberingbase]{}
\newcommand{\bbpp}{\begin{bpp}}
\newcommand{\eepp}{\end{bpp}}
\newcommand{\bbppt}{\begin{bppt}}
\newcommand{\eeppt}{\end{bppt}}
\theoremstyle{theorem}
\theoremstyle{definition}
\newcommand{\val}{\mathrm{val}}		% Valuation
\providecommand{\qxq}[1]{\quad\text{#1}\quad}
\newcommand{\tst}{\textstyle}
\newcommand{\sU}{\mathscr{U}}
\newcommand{\wt}{\widetilde}
\let\epsilon\varepsilon
\let\phi=\varphi
\def\qed{\ifhmode\textqed\fi
      \ifmmode\ifinner\quad\qedsymbol\else\dispqed\fi\fi}
\def\textqed{\unskip\nobreak\penalty50
       \hskip2em\hbox{}\nobreak\hfil\qedsymbol
       \parfillskip=0pt \finalhyphendemerits=0}
\def\dispqed{\rlap{\qquad\qedsymbol}}
\opn\dis{dis}
\def\pnt{{\raise0.5mm\hbox{\large\bf.}}}
\opn\Lex{Lex}
\begin{document}

\title{Valuation rings as limits of complete intersection rings}

\author{ Dorin Popescu}

\dedicatory{In the memory of J\"urgen Herzog}

\address{Simion Stoilow Institute of Mathematics of the Romanian Academy,
Research unit 5, P.O. Box 1-764, Bucharest 014700, Romania,}

\address{University of Bucharest, Faculty of Mathematics and Computer Science
Str. Academiei 14, Bucharest 1, RO-010014, Romania,}

\address{ Email: {\sf dorin.m.popescu@gmail.com}}

\begin{abstract} We show that a valuation ring  containing its residue field of characteristic $p>0$ is a filtered colimit of   complete intersection ${\bf F}_p$-algebras. 

 {\it Key words }: immediate extensions, pseudo convergent sequences, pseudo limits, ultrapowers, smooth morphisms, Henselian Rings.   \\
 {\it 2020 Mathematics Subject Classification: Primary 13F30, Secondary 13A18,13L05,13B40.}
\end{abstract}

\maketitle

\section*{Introduction}

The Zariski Uniformization Theory \cite{Z}  plays an important role in Desingularization Theorem.

\begin{Theorem}(Zariski) \label{T} Let $V$ be a  valuation ring containing a field $L\supset {\bf Q}$. Then $V$ is a filtered union of its regular local subrings essentially of finite type over $L$.
\end{Theorem}

This theorem says in fact that $V$ is  a filtered union of its smooth $L$-subalgebras  of finite type and
 the following theorem  is connected with it.

\begin{Theorem} \cite[Theorem 21]{P}\label{T0} If $V\subset V'$ is an immediate extension of valuation rings   containing $\bf Q$
then $V'$ is a filtered colimit of smooth $V$-algebras. 
\end{Theorem}

An inclusion $V \subset V'$ of valuation rings is an \emph{immediate extension} if it is local as a map of local rings and induces isomorphisms between the value groups and the residue fields of $V$ and $V'$. 
We recall that a  filtered colimit  is a limit indexed by a small category that is filtered
  (see \cite[002V]{SP} or \cite[04AX]{SP}). A filtered  union is a filtered direct limit in which all objects are subobjects of the final colimit,
   so that in particular all the transition arrows are monomorphisms.

   The above theorem does not hold when the valuation ring extension is algebraic and contains a field of positive characteristic (see \cite[Example 3.13]{Po1} inspired from \cite{O} and \cite[Remark 6.10]{Po1}), but  we got  the following result.

\begin{Theorem}\cite[Theorem 4]{P3}\label{T1} Let  $V\subset V'$ be an immediate  extension of valuation rings   containing  a field and $K\subset K'$ its fraction field extension. If $K'=K(x)$ for some algebraically independent system of elements $x$ over $K$ 
then $V'$ is a filtered union of smooth $V$-subalgebras of $V'$. 
\end{Theorem}

As in \cite[Proposition 18]{P1}(see also \cite[Proposition 1]{P4}) we have the following corollary.

\begin{Corollary}\cite[Corollary 24]{P3}\label{C0} Let $V\subset V'$ be an immediate extension of valuation rings containing a field and $K\subset K'$ its fraction field extension. If $K'=K(x)$ for some algebraically independent system of elements $x$ over $K$ 
and $V$ is Henselian then every finite system of polynomials over $V$, which has a solution in $V' $ has also one in $V$.
\end{Corollary}

The ideas of this corollary were used to show some conjectures of M. Artin in 
\cite[Theorems 1.3, 1.4]{Po0} (see also \cite[Theorem 5.3.1]{I}) using the General N\'eron desingularization (see  \cite[Theorem 2.5]{Po0}, \cite[Theorem 1.1]{S} and \cite[Theorem 5.2.56]{I}).

 A consequence of Theorem \ref{T1}   is a form of   Zariski's Uniformization Theorem in the special case of
 positive characteristic.
\begin{Corollary}(\cite[Corollary 5]{P3}, \cite[Theorem 1.3]{P5})\label{C} Let $V$ be a  valuation ring containing its residue field $k$   with a  value group $\Gamma$ free as a $\bf Z$-module, $\val$ its valuation and $K$ its fraction field. Assume that  $K=k(x,y)$ for some  algebraically independent elements $x,y$,  $x=(x_i)_{i\in I},y=(y_j)_{j\in J}$ over $k$ such that $\val(y)$ is a basis in $\Gamma$. 
 Then $V$ is a filtered  union of its smooth $k$-subalgebras.
\end{Corollary}
 This Corollary says  that $V$ is  a filtered union of its regular local subrings essentially of finite type over $k$.

In general, we got the following theorem based on Theorem \ref{T1} and \cite[Theorem 1]{P2}.
 
\begin{Theorem} (\cite[Theorem 6]{P3}, \cite[Theorem 4.3]{P5})\label{T2} Let $V\subset V'$ be an  immediate extension of valuation rings containing a field.   Then $V'$ is a filtered
 union of its complete intersection $V$-subalgebras of finite type.
\end{Theorem}

A {\em complete intersection} $V$-algebra of finite type is a  $V$-algebra of type $C/(P)$, where $C$ is  a polynomial $V$-algebra and $P$ is a regular  sequence in  $C$.  Thus the above theorem says that $V'$ is a  filtered
 union of its $V$-subalgebras of type $C/(P)$. Since $V'$ is local it is enough to say that $V'$ is  a  filtered
 union of its $V$-subalgebras of type $T_h/(P)$,  $T$ being a polynomial $V$-algebra, $0\not =h\in T$ and $P$  a regular sequence in $T$. Clearly, $ T_h$ is a smooth $V$-algebra and in fact it is enough to say that $V'$ is  a  filtered
 union of its $V$-subalgebras of type $G/(P)$, where $G$  is a smooth $V$-algebra and $P$ is a regular system of elements of $G$. Conversely, a $V$-algebra of such type $G/(P)$ has the form $T_h/(P)$ for some $T,h,P$ using \cite[Theorem 2.5]{S}. By abuse we understand by a {\em complete intersection } $V$-algebra a $V$-algebra of such type $G/(P)$, or $T_h/(P)$ which are not assumed to be flat over $V$.

Like Corollary \ref{C} we have the following corollary. 

\begin{Corollary}(\cite[Corollary 7]{P3}, \cite[Corollary 4.4]{P5}, see also \cite[Theorem 19]{P4})\label{C1} Let $V$ be a  valuation ring containing its residue field $k$   with a free value group  $\Gamma$.  
 Then $V$ is a filtered  union of its complete intersection $k$-subalgebras of finite type.
\end{Corollary}

\begin{Remark} \label{r0} A finitely generated ordered group is free.
\end{Remark}

The goal
of this paper is to extend   Corollary \ref{C} (see Theorems \ref{t3}, \ref{t4}, \ref{t-1}) and
 to establish the following theorem, that is   an extension of the above corollary for the case when $\Gamma$ is not finitely generated.

\begin{Theorem}\label{T3} Let $V$ be a  valuation ring containing a perfect field $F$ of positive characteristic, $k$ its residue field and   $\Gamma$ its  value group.  Then $V$ is a filtered
colimit of  complete intersection $F$-algebras of finite type if one of the following conditions holds:
\begin{enumerate}

\item  $k\subset V$, 
\item  $V$ is Henselian. 
\end{enumerate}
\end{Theorem}

The proof is given in  Theorems \ref{t1}, \ref{t2}. It is possible that the above theorem could give a complete intersection version of 
the Desingularization Theorem.

  When $\Gamma$ is not finitely generated we have to use  the cross-sections (these are retractions $\Gamma \to K^*$ of the valuation) as in    \cite[Lemma 7.9]{v}, or  \cite[3.3.39, 3.3.40]{ADH} obtained using  ultrapower methods as we did it in \cite{P} when $V\supset {\bf Q}$. The same methods are necessary to reduce to the case when the residue field $k\subset V$ in the case when $V$ is Henselian (see Theorem \ref{bt1}). 

The above theorem connects the theory of valuation rings with the theory of complete intersection local rings. We remind that for a given complete local ring $(R,\mm,k)$ a minimal free resolution over $R$ of the residue field $k$ is described in \cite[Theorem 4]{Ta} as well as in
 \cite[Theorem 2.7]{As}, \cite[§6]{Av}, \cite[Proposition 1.5.4]{GL},  \cite[§3]{HM}, \cite[ Theorem 2.5]{AHS}, \cite[Theorem 4.1]{NV}.

We owe thanks to some anonymous referee who had the useful comments on our paper.

\vskip 0.5 cm

\section{Filtered colimits of smooth algebras}

A possible extension of Zariski's Uniformization Theorem is the following theorem.

\begin{Theorem} \label{TAD}
 (B. Antieau, R. Datta \cite[Theorem 4.1.1]{AD}) Every perfect valuation ring of characteristic $p>0$ is a filtered union of its smooth ${\bf F}_p$-subalgebras. 
 \end{Theorem}

Using the above theorem we get a consequence of Theorem \ref{T1} similar to Corollaries \ref{C}, \ref{C1}.

\begin{Corollary}  Let $V\subset V'$ be an immediate extension of valuation rings of positive characteristic $p$
and $K\subset K'$ their fraction field extension. Assume that $K=K^p$ and $K'$ is a pure transcendental extension of $K$. Then $V'$ is a  filtered union of its smooth ${\bf F}_p$-subalgebras. If $K'/K$ is not a pure transcendental extension
then $V'$ is  a filtered union of its complete intersection  ${\bf F}_p$-subalgebras of finite type.
 \end{Corollary}

For the proof of the first statement, note that $V$ is a filtered union of its smooth ${\bf F}_p$-subalgebras by Theorem \ref{TAD} and $V'$ is a filtered union of its smooth $V$-subalgebras by Theorem \ref{T1}. The proof of the second statement goes similarly using Theorem \ref{T2} instead Theorem \ref{T1}. It is also necessary to note that a complete intersection algebra over a complete intersection algebra is still a complete intersection algebra by  \cite[Lemma 6]{P2}.

We recall below    \cite[Lemma 7]{P}, which is an extension of   
 \cite[Proposition 3]{KPP}, and \cite[Proposition 5]{Po2}.

\begin{Lemma} \label{k'}
For a commutative diagram of ring morphisms

\xymatrix@R=0pt{
& B \ar[rd] & & && B \ar[dd]^{b \, \mapsto\, a} \ar[rd] & \\
A \ar[rd] \ar[ru] & & V & \mbox{that factors as follows} & A \ar[ru]\ar[rd] & & V/a^3V \\ 
& A' \ar[ru] & & && A'/a^3A' \ar[ru] &
}
\noindent with $B$ finitely presented over $A$, a $b \in B$ that is standard over $A$ (this means a special element from the ideal $H_{B/A}$ defining the non smooth locus of $B$ over $A$, for details see for example \cite[Lemma 4]{P}), and a nonzerodivisor $a \in A'$ that maps to a nonzerodivisor in $V$ that lies in every maximal ideal of $V$,
there is a smooth $A'$-algebra $S$ such that the original
diagram factors as follows:

\hskip 4 cm\xymatrix@R=0pt{
& B \ar[rdd] \ar[rrd] & & \\
A \ar[rd] \ar[ru] & &  & V. \\
& A' \ar[r]  & S \ar[ru] &
}

 \end{Lemma}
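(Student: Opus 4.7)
The plan is to prove this via the general N\'eron desingularization mechanism, following \cite[Proposition 3]{KPP} or \cite[Proposition 5]{P1}. First I would unpack the hypothesis that $b \in H_{B/A}$ is standard: fixing a finite presentation $B = A[X_1,\ldots,X_n]/(f_1,\ldots,f_r)$, standardness supplies (after renumbering) an $s\times s$ minor $\Delta$ of the Jacobian matrix $(\partial f_i/\partial X_j)_{1\leq i,j\leq s}$ such that $\Delta$ divides a power of $b$ in $B$, and the remaining relations $f_{s+1},\ldots,f_r$ are annihilated by a power of $b$ modulo $(f_1,\ldots,f_s)$. This is exactly the data making $B[1/b]$ smooth over $A$ via the Jacobian criterion.

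Next I would extract approximate coordinates from the modulo-$a^3$ factorization: letting $x_i \in V$ denote the image of $X_i$ under $B \to V$, the factorization through $A'/a^3 A'$ provides lifts $x'_i \in A'$ with $x_i - x'_i \in a^3 V$ and $f_j(x') \in a^3 A'$ for every $j$. Writing $f_j(x') = a^3 h_j$ with $h_j \in A'$ and $x_i - x'_i = a^3 w_i$ with $w_i \in V$ reduces the problem to producing a smooth $A'$-algebra in which the approximation $x'$ can be simultaneously corrected into a genuine solution matching $x$ in $V$.

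The construction of $S$ is a single Newton step. One introduces variables $T_1,\ldots,T_s$, writes down polynomial expressions $Y_j(x',T) \in A'[T]$ built from $x'_j$, the adjugate of the chosen Jacobian submatrix evaluated at $x'$, the errors $h_j$, and appropriate powers of $a$, and sets $S = A'[T_1,\ldots,T_s]/\bigl(f_1(Y)/a^3,\ldots,f_s(Y)/a^3\bigr)$. A Taylor expansion of $f_i(Y)$ around $x'$ shows these defining relations of $S$ are honest polynomials in $A'[T]$ (the cube $a^3$ absorbs both the $a^3 h_i$ error and the quadratic $O(a^4)$ Taylor tail), and their Jacobian with respect to $T$ reduces modulo $a$ to a power of $\Delta(x')$ times a unit. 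By the Jacobian criterion $S$ is then smooth over $A'$, once one identifies $\Delta(x')$ as invertible in the appropriate localization via standardness of $b$. The map $B \to S$ sends $X_i \mapsto Y_i$, killing $f_1,\ldots,f_s$ by construction and killing $f_{s+1},\ldots,f_r$ via the standardness colon relation; the map $S \to V$ sends each $T_k$ to the unique element of $V$ that makes $Y_k$ match $x_k$, which exists because $a$ lies in the Jacobson radical of $V$, so the inversion factor $1 + a \cdot u$ arising from Newton's method is a unit in $V$.

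The main obstacle is the delicate bookkeeping of powers of $a$ and $\Delta(x')$. The cube $a^3$ is chosen exactly so that Newton's quadratic convergence (producing an $O(a^4)$ error from the quadratic Taylor term) can be divided by $a^3$ to yield a genuine unit correction; any lower power of $a$ would leave residual obstructions. A secondary difficulty is controlling the non-Jacobian relations $f_{s+1},\ldots,f_r$, which must be shown to vanish in $S$ under the map $B \to S$: this uses that $b$ (and hence $\Delta$) makes them redundant modulo $(f_1,\ldots,f_s)$ after inverting $b$, combined with $\Delta(x')$ acting as a near-unit in $S$.
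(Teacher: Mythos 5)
Your overall strategy (single Newton step with a cube, Taylor expansion, using the colon part of standardness to absorb the non-Jacobian relations) is the right framework, and you correctly identify that $a^3$ is the minimal exponent for which the quadratic tail can be divided out. However, the smoothness argument as stated does not work, and the gap is genuine.

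You claim the Jacobian of the defining relations ``reduces modulo $a$ to a power of $\Delta(x')$ times a unit,'' and that $S$ is smooth ``once one identifies $\Delta(x')$ as invertible in the appropriate localization.'' The problem is that no such localization can be compatible with the map $S\to V$. Standardness writes $b=N\Delta$ in $B$ with $N\in((f_1,\ldots,f_s):I)$; combined with the hypothesis that $b\mapsto a$ in $A'/a^3A'$ and that $f_j(x')\in a^3A'$, this forces $N(x')\Delta(x')\equiv a\pmod{a^3A'}$. Passing to $V$, the image of $\Delta$ divides $a$ up to a unit, and $a$ lies in every maximal ideal of $V$ by hypothesis. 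So $\Delta(x)$ is never a unit in $V$, and any $S$ in which a $\Delta$-like element has been inverted admits no map to $V$ extending the given data. The correct mechanism is the opposite: one builds $N(x')$ and the adjugate $G(x')$ into the substitution (e.g.\ $Y_j=x'_j+a^2N(x')\sum_k G_{jk}(x')T_k+a^3T'_j$), so that the linear Taylor term produces $a^2N(x')\Delta(x')T_i=a^3(1+a^2u)T_i$; after dividing by $a^3$ (legitimate since $a$ is a nonzerodivisor in $A'$), the defining relations have Jacobian with respect to $T$ congruent to the identity modulo $a$. One then localizes at its determinant, which lies in $1+aA'[T,T']$ and therefore maps to a unit of $V$ because $a$ is in the Jacobson radical. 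It is this element, not $\Delta(x')$, that gets inverted.

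Two further points need to be filled in. First, you only introduce variables $T_1,\ldots,T_s$, but the composite $B\to S\to V$ must send $X_j$ to $x_j$ for all $j$, including $j>s$; since $x'_j$ only agrees with $x_j$ modulo $a^3V$, you need additional free variables $T'_j$ (with $Y_j=x'_j+a^3T'_j$ for $j>s$) so that $T'_j\mapsto(x_j-x'_j)/a^3\in V$. Second, killing $f_{s+1},\ldots,f_r$ requires more than ``$\Delta(x')$ acting as a near-unit'': from $N\in((f):I)$ one gets $N(Y)f_j(Y)=0$ already in $D=A'[T,T']/(f_1(Y)/a^3,\ldots,f_s(Y)/a^3)$, and one concludes $f_j(Y)=0$ in $S$ by observing that $N(Y)\Delta(Y)=a\cdot(\text{unit of }S)$ after the localization above, while $a$ is a nonzerodivisor in the smooth hence flat $A'$-algebra $S$; thus $N(Y)$ is a nonzerodivisor in $S$. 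Your sketch leaves all of this implicit, and the explicit claim about inverting $\Delta(x')$ contradicts the very constraint that makes the lemma nontrivial.
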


 Let $V$ be a valuation ring,  $\lambda$  a fixed limit ordinal  and $v=\{v_i \}_{i < \lambda}$ a sequence of elements in $V$ indexed by the ordinals $i$ less than  $\lambda$. Then $v$ is \emph{pseudo convergent} if $\val(v_{i} - v_{i''} ) < \val(v_{i'} - v_{i''} )     \ \ \mbox{for} \ \ i < i' < i'' < \lambda$
(see \cite{Kap}).
A  \emph{pseudo limit} of $v$  is an element $v \in V$ with $ \val(v - v_{i}) < \val(v - v_{i'}) \ \ \mbox{(that is,} \ \ \val(v -  v_{i}) = \val(v_{i} - v_{i'})) \ \ \mbox{for} \ \ i < i' < \lambda$.

When for any $\gamma\in \Gamma$ it holds that $\val(v_{i} - v_{i'})>\gamma$ for  $ i < i' < \lambda$
large we call $v$ {\em fundamental}. $V$ is {\em complete} if every fundamental sequence over $V$ has a pseudo limit in $V$ (in this case it is unique and  is called {\em limit}). The completion 
of a valuation ring $V$ is the ring formed  by all the fundamental
sequences with the obvious operations of addition and multiplication, modulo the natural equivalence relation.
 An immediate extension $V\subset V'$ of valuation rings is dense if any element of $V'$ is a limit of a fundamental sequence of $V$. Thus if $V\subset V'$ is dense then $V,V'$  have  the same completion.

Next we recall  an application of Lemma \ref{k'}. 

\begin{Proposition}(\cite[Proposition  9]{P} \label{dense}
For a  ring $A$, a dense extension of valuation rings (see Section 3) $V\subset V'$, $K$ the  fraction field of $V$,  a ring morphism $A \to V$, a finitely presented $A$-algebra $B$, and maps
\[
A \to B \to V \ \ \mbox{such that} \ \ B \to K \ \ \mbox{factors through some $A$-smooth localization of $B$}
\] 
suppose that
 there exist a smooth $A$-algebra $S'$ and a factorization $
A \to B \to S' \to V'$. Then
 there exist a smooth $A$-algebra $S$ and a factorization $
A \to B \to S \to V$. 
In particular, there exist a smooth $A$-algebra $S$ and a factorization $A \to B \to S \to V$   if  there exist a smooth $A$-algebra $\hat S$ and a factorization $A \to B \to {\hat S} \to {\hat V}$,  ${\hat V}$ being the completion of $V$. 
\end{Proposition}

\begin{Remark} \label{R} In the spirit  of the above proposition, let $V\subset V'$ be a separable dense extension of valuation rings, $K\subset K'$ its fraction field extension, $B$ a finitely presented $V$-algebra and $\rho:B\to V'$ a $V$-morphism.
We may change $B$ by the image of $\rho$. It remains finitely presented over $V$ by \cite[Theorem 4]{Na}.
 As $K'/K$ is separable there exists a smooth $V$-algebra $S''$ such that the composite map 
$B\xrightarrow{\rho} V'\to K'$ factors through $S''$. Thus $\rho(H_{B/V})V'\not =0$ and so   $\rho(H_{B/V})\not =0$ 
and $V/\rho(H_{B/V})V\cong V'/\rho(H_{B/V})V'$, since $V\subset V'$ is dense. Choose a standard element $d\in V$ for $B$ over $A$.
Then $\rho$ factors through a smooth $V$-algebra $S'$  using Lemma \ref{k'} applied to $A=A'=V$ and $d,V'$ because $V/d^3V\iso V'/d^3V'$. 
\end{Remark}
 
The following theorem follows from  the above remark, Theorem \ref{T1} and the proof of Corollary \ref{C}. 

\begin{Theorem} \label{t3} Let $V\subset V'$ be an immediate extension of valuation rings of positive characteristic, $\val$ the valuation of $V'$ 
and $K\subset K'$ their fraction field extension. Assume that 

\begin{enumerate}

\item $V$ contains its residue field $k$ and $x$ is a system of elements of $V$ such that $\val(x)$ forms a $\bf Z$-linear basis of the value group of $V$.

\item $V$ is a separable dense extension of $V\cap k(x)$.

\item $K'$ is a separable dense extension of a pure transcendence field extension of $K$.
\end{enumerate}

Then $V'$ is a filtered colimit of smooth $k$-algebras.

\end{Theorem}

\begin{proof} As in the proof of Corollary \ref{C}, $V\cap k(x)$ is a filtered union of some localizations of polynomial algebras over $k$. 
 Let $y$ be a transcendental basis of $K'$ over $K$ such that $K(y)\subset K'$ is dense. The separable dense extensions $V\cap k(x)\subset V$ and $V'\cap K(y)\subset V'$ are filtered colimits of smooth $V\cap k(x)$-algebras, respectively $V'\cap K(y)$-algebras by the above remark. It remains to apply Theorem \ref{T1} to the extension $V\subset V'\cap K(y)$.
 \hfill \end{proof}

An extension of integral domains is {\em separable} if  its fraction field extension is separable.

Using Theorem \ref{TAD} we get a similar theorem.

\begin{Theorem} \label{t4} Let $V\subset V'$ be an immediate extension of valuation rings of positive characteristic $p$, $\val$ the valuation of $V'$ 
and $K\subset K'$ their fraction field extension. Assume that 

\begin{enumerate}

\item There exists a valuation subring $W$ of $V$ such that $W=W^p$.
\item $V$ is a separable dense extension of $W$.

\item $K'$ is a separable dense extension of a pure transcendental field extension of $K$.
\end{enumerate}

Then $V'$ is a filtered colimit of smooth ${\bf F}_p$-algebras.

\end{Theorem}

In the case of one dimensional valuation extension there exists a stronger form of Theorem \ref{t3}, which we will present next together with some necessary facts concerning the    defectless of a certain valuation ring.

Let $V$ be a  valuation ring with value group $\Gamma$,  $K$ its fraction field and  its valuation $\val$. Let $K'$ be a finite field extension of $K$, $v_i$, $1\leq i\leq r$ the valuations of $K'$ extending $\val$ and $V_i$  the valuation rings of $K'$ defined by $v_i$. Let $e_i, f_i$ be the ramification index, respectively the degree of the residue field extension of $V_i$ over $V$. It is well known that 
$$[K':K]\geq \sum_{i=1}^r e_if_i.$$
 If this inequality is an equality for all finite  field  extensions $K'$ of $K$ we say that $V$ is {\em defectless} (see \cite{K} for  details and examples).

 Very useful is the following  particular form of the Generalized Stability Theorem of F. V. Kuhlmann (see \cite[Theorem 1.1]{K1}, \cite[Theorem 5.1]{K}).

\begin{Theorem}(Kuhlmann)\label{k}  Let $V\subset V'$ be an extension of   valuation rings   with the same residue field, $\Gamma\subset \Gamma'$ its value group extension, $K\subset K'$ its fraction field and $\val$ its valuation.  Assume  $K'/K$ is a finite type field extension and  $\Gamma'/\Gamma$ is a finitely generated   free $\bf Z$-module  of rank   $\trdeg K'/K $. If $V$ is defectless  then $V'$ is  defectless,  too. 
\end{Theorem}

\begin{Corollary}\label{ku}  Let $V$ be a  valuation ring of positive characteristic   with  finitely generated value group $\Gamma$, $\val$ its valuation and $K$ its fraction field. Assume  $V$ contains its residue field $k$ and $K=k(y)$ for some elements $y=(y_1,\ldots,y_r)$ of $V$ such that $\val(y_i)$, $1\leq i\leq r$ is a basis of the free $\bf Z$-module $\Gamma$.
Then $V$ is  defectless. 
\end{Corollary}
For the proof apply the above theorem to $k\subset V$ since $k$ 
is defectless for the trivial valuation.

The following lemma is \cite[Lemma 8]{P-1} but we  sketch a proof here.

\begin{Lemma}\label{1}
 Let $V\subset V'$ be an immediate extension of one dimensional valuation rings and $K\subset K'$ their fraction field extension. If $V$ is defectless and $K'/K$ is  algebraic then $ K'/K$ is dense.
 \end{Lemma}
 \begin{proof} We may consider only the case when  $K'/K$ is finite. Let $K^h$, $K'^h$ be the Henselizations of $K$ respectively $K'$ and $L=K^h(K')\subset K'^h$.  Note that $K^h$ is  defectless because $K$ is so (see \cite[Theorem 2.3]{K}).
 Then $[L:K^h]=e_{L/K^h}f_{L/K^h}=1$ because there exists a unique extension of the valuation of  $K^h$  to $L$ and so $K'\subset L\subset K^h$.
 But  $K\subset K^h$ is dense because $\dim V=1$. 
\hfill\ \end{proof}

\begin{Theorem} \label{t-1} Let $V\subset V'$ be an immediate  extension of one dimensional valuation rings of positive characteristic, $\val$ the valuation of $V'$ 
and $K\subset K'$ their fraction field extension. Assume that 

\begin{enumerate}

\item $V$ contains its residue field $k$ and $x$ is a finite system of elements of $V$ such that $\val(x)$ forms a $\bf Z$-linear basis of the value group of $V$.

\item There exists an algebraic separable field extension $L\subset K$ of $k(x)$ such that 
 $V$ is a separable dense extension of $V\cap L$.

\item $K'$ is a separable dense extension of a pure transcendental field extension of $K$.
\end{enumerate}

Then $V'$ is a filtered colimit of smooth $k$-algebras.

\end{Theorem}

\begin{proof}
The proof follows as in Theorem \ref{t3}. We should see  that the extension $L/k(x)$ is dense. This follows from Lemma \ref{1} and Corollary \ref{ku}.
\hfill\ \end{proof}

\begin{Remark} If the value group of $V$ is not finitely generated then the above theorem is false, because the algebraic separable extension $L/k(x)$ could be  not dense, as shows \cite[Example 9]{P-1}, or \cite[Example 8]{P4}.
\end{Remark}

\vskip 0.5 cm

\section{Model Theory of valued fields}

The goal of the next corollary is   to extend Corollary \ref{C1}.

\begin{Corollary}\label{c1} 
Let $V$ be a  valuation ring containing a field $F$ of positive characteristic, $k$ its residue field and   $\Gamma$ its  value group.   Assume that  $\Gamma$ is finitely generated,  $k/F$ is separably generated  (that is there exists a transcendental basis $x$ of $k$ over $F$ such that the field extension $F(x)\subset k$ is algebraic separable) and  $V$ is Henselian.    Then  $V$ is a filtered
 union of its complete intersection $F$-subalgebras.
\end{Corollary}
\begin{proof} The conclusion follows by Corollary \ref{C1} if we show that $k\subset V$.  A lifting of $k$ to $V$ can be done when $V$ is Henselian and $k/F$ is separably generated. 
\hfill\ \end{proof}

\begin{Remark} \label{r0'} The above lifting follows by \cite[Theorem 2.9]{v} when $\ch(k)=0$. The proof is the same in positive characteristic when $k/F$ is separably generated.
\end{Remark}

\begin{Corollary}\label{cf} 
Let $V$ be a  valuation ring containing a  field $F$ of positive characteristic, $k$ its residue field,   $\Gamma$ its  value group and $K$ its fraction field.   Assume that there exists a cross-section  $s:\Gamma\to K^*$   and either $k\subset V$, or   $k/F$ is separably generated  and  $V$ is Henselian.    Then  $V$ is a filtered
 union of its complete intersection $F$-subalgebras.
\end{Corollary}
 The proof goes as in Corollaries \ref{C1}, \ref{c1} with $V_0=V\cap k(s(\Gamma))$. More precisely, $V_0$ is a filtered union of $V\cap k(s(\Gamma'))$  for all finitely generated subgroups (so free)  $\Gamma'$ of $\Gamma$, which are localizations of polynomial $k$-algebras.

There are two problems to reduce to Corollaries \ref{C}, \ref{c1}, \ref{cf}. These are to reduce to the case when the following conditions hold:

i)   $V$ has a cross-section $s:\Gamma\to K^*$, and

ii)  $V$ contains its residue field.

For i) we may use  \cite[Proposition 5.4, Lemma 7.9]{v}, or \cite[3.3.39, 3.3.40]{ADH}. 
In \cite[Theorem A.10]{P} there exists the following  variant of these results.

\begin{Theorem} \label{bt}
For a valuation ring $V$ with value group $\Gamma$, there is a countable sequence of ultrafilters ${\sU}_1, {\sU}_2, \dots$ on some respective sets $U_1, U_2, \dots$ for which the valuation rings $\{V_n\}_{n \geq 0}$ defined inductively by $V_0 := V$ and $V_{n + 1} := \prod_{{\sU}_{n + 1}} V_{n}$ %for $n \geq 1$ 
are such that the valuation ring
\[
\tst \wt{V} := \varinjlim_{n \geq 0} V_n \qxq{admits a cross-section}  \wt{s} : \wt{\Gamma} \to \wt{K}^*,
\]
where $\wt{K}$ and $\wt{\Gamma}$ are the fraction field and the value group  of $\wt{V}$, respectively.
\end{Theorem}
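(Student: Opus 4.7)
The plan is to reduce the theorem to the two model-theoretic facts recalled in the paragraph preceding the statement: that every $\aleph_1$-saturated valued field admits a cross-section, and that every valued field has an $\aleph_1$-saturated elementary extension which can be realized as an (iterated) ultrapower. Given these, the assertion becomes essentially a bookkeeping exercise in the iterated ultrapower construction.

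Concretely, I would take $\sU_1$ to be any countably incomplete ultrafilter on $U_1 = \NN$, and for $n \geq 2$ let $\sU_n$ be the principal ultrafilter on a one-element set $U_n = \{*\}$. Then $V_1 = \prod_{\sU_1} V_0$ is the usual ultrapower, $V_n \cong V_1$ for all $n \geq 1$, and $\wt{V} = \varinjlim_n V_n$ coincides with $V_1$. Since the (three-sorted) language of valued fields is countable, the Keisler--Chang saturation theorem for countably incomplete ultrapowers ensures that $\wt{V}$ is $\aleph_1$-saturated as a valued field. Fact (1), i.e.\ \cite[Lemma 7.9]{v}, then produces a cross-section $\wt{s}: \wt{\Gamma} \to \wt{K}^*$, as required.

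If one wishes to avoid citing Kuhlmann's lemma as a black box, the construction of $\wt{s}$ proceeds by a Zorn argument on partial cross-sections. Given a partial cross-section $s: \Gamma' \to \wt{K}^*$ on a subgroup $\Gamma' \subset \wt{\Gamma}$ and an element $\gamma \in \wt{\Gamma} \setminus \Gamma'$, to extend to $\Gamma' + \ZZ \gamma$ one picks any $a \in \wt{K}^*$ with $\val(a) = \gamma$ if $\gamma$ has infinite order modulo $\Gamma'$; otherwise, with $n$ the least positive integer such that $n\gamma \in \Gamma'$, one realizes the type $\{\val(x) = \gamma,\; x^n = s(n\gamma)\}$. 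This is a finite partial type over a single parameter, consistent inside the algebraic closure of $\wt{K}$ (which contains an $n$-th root of $s(n\gamma)$ of valuation $\gamma$), so by $\aleph_1$-saturation it is realized in $\wt{V}$.

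The main obstacle will be verifying that a single countably incomplete ultrapower truly delivers $\aleph_1$-saturation of the full multi-sorted structure (ring, value group, residue field, and valuation map). For large base $V_0$ one may prefer the genuinely iterated version: keep every $\sU_n$ countably incomplete, producing an $\omega$-chain of elementary extensions whose colimit is $\aleph_1$-saturated by a standard chain argument, thereby using the full countable sequence of ultrafilters appearing in the statement. Once saturation is secured, the Zorn/saturation step above finishes the proof.
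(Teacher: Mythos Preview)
The paper does not prove Theorem~\ref{bt} in the body; it quotes it from \cite[Theorem~A.10]{P} and indicates in Remark~\ref{r} that the argument runs parallel to the proof of Theorem~\ref{bt1} given here. That proof uses Proposition~\ref{kes} (a $\tau^{+}$-compactness property of a suitably chosen ultrapower) to solve, at each stage $n$, the system of equations encoding a section $s_n\colon \Gamma_n\to K_{n+1}^*$ (respectively a lifting $t_n\colon k_n\to V_{n+1}$), and then passes to the colimit of the compatible $s_n$. The iteration is genuinely used: each ultrapower enlarges the value group, so one must repeat. Your main route is different and more economical: a single countably incomplete ultrapower over a countable language is already $\aleph_1$-saturated (Keisler), and then \cite[Lemma~7.9]{v} yields the cross-section on the whole of~$\wt\Gamma$ at once; padding with principal ultrafilters to match the phrasing of the statement is harmless. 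This is exactly the content of facts~(1)--(2) the paper records just before the theorem, so your approach is correct and in fact the one the paper itself points to as the conceptual source.

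There is, however, a real gap in your optional ``unpacking'' of \cite[Lemma~7.9]{v} via Zorn. The type $\{\val(x)=\gamma,\ x^n=s(n\gamma)\}$ you write down is \emph{finite}, so for it ``consistent'' and ``realized in $\wt K$'' are the same thing; invoking $\aleph_1$-saturation here is circular. Realization in the algebraic closure of $\wt K$ does not help, since that field is in general not an elementary extension of $\wt K$ (and $\wt K^{*}$ need not be $n$-divisible). The correct mechanism behind \cite[Lemma~7.9]{v} is different: in an $\aleph_1$-saturated valued field the unit group $\mathcal O^{*}$ is an $\aleph_1$-saturated abelian group, hence algebraically compact (pure-injective); since $\Gamma$ is torsion-free, $\mathcal O^{*}$ is pure in $K^{*}$, so the exact sequence $1\to\mathcal O^{*}\to K^{*}\xrightarrow{\val}\Gamma\to 0$ splits, giving the cross-section. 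Either keep your black-box citation of \cite[Lemma~7.9]{v}, which is perfectly adequate, or replace the Zorn step with this pure-injectivity argument.
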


\begin{Theorem}\label{t1}
Let $V$ be a  valuation ring  containing its residue field $k$ of positive characteristic and   $\Gamma$ its  value group.   Then  $V$ is a filtered
colimit of  complete intersection $k$-algebras.  Moreover,
 $V$ is a filtered  colimit of   complete intersection  ${\bf F}_p$-algebras  of finite type.
\end{Theorem}

\begin{proof} 
 We may assume that $V$ is not a field, otherwise the conclusion is trivial. Let $E$ be a finitely generated $ k$-subalgebra (in fact finitely presented by \cite[Theorem 4]{Na}) of $V$ and $w:E\to V$  a morphism. We will show that $w$ factors through a complete intersection  $V$-algebra. It is enough to replace $V$ by the intersection of $V$ with the fraction field $K_E$ of $E$.
 
  As $K_E$ is a finite type field extension of $k$  we may assume   $\Gamma$ is countably generated  by \cite[Corollary 5.2]{APZ}. Consider  $U_i$, ${\sU}_i$, $\wt{V}$, $\wt{k}$, $\wt{\Gamma}$, $\wt{s}$ given by Theorem \ref{bt}  for  $V$. Note that $\wt{k}\subset \wt{V}$ because $k\subset V$. As usual $\wt{V}$ is an immediate extension of the valuation ring $\wt{V}_0=\wt{V}\cap \wt{k}(\wt{s}(\wt{\Gamma}))$,  which is a filtered  union of localizations of polynomial algebras over $\wt{k}$. Using Theorem \ref{T2}  we see that $\wt{V}$ is a filtered  union of its complete intersection $\wt{V}_0$-subalgebras. 
  
  Then the map $w' :\wt{k}\otimes_F E\to \wt{V}$ factors through a complete intersection $\wt{k}$-algebra and so the map $E\to \wt{V}$ induced by $w$ factors through a complete intersection $k$-algebra $D$ since $\wt{k}/k$ is separable.
   Thus $w$ factors through $D$ too   because all finite systems of polynomial equations which have a solution in $\tilde V$ must have one in $V$. 
 The proof ends applying Remark \ref{s} below.
\hfill\ \end{proof}

\begin{Remark} \label{s} (\cite[Lemma 1.5]{S})  Let $A\subset B$ be an extension of rings. Then $B$ is a filtered colimit of  complete intersection $A$-algebras if and only if for each finitely generated $A$-algebra $C$ and each $A$-morphism $w:C\to B$ there exists a complete intersection $A$-algebra $D$ such that $w$ factors through $D$.
\end{Remark}

The problem from ii) is also hard. As we explain in the proof of Corollary \ref{c1} the lifting of the residue field $k$ of $V$ when the characteristic $p>0$ could be done when $V$ is Henselian and $k/{\bf F}_p$ is of finite type. To glue the liftings of finite type subfields of $k$ is another problem which is solved using Model Theory.

We need \cite[Proposition A.6]{P} (useful also in the proof of Theorem \ref{bt}), which is obtained using \cite[Theorem 6.1.4]{CK} and says in particular the following:

\begin{Proposition}\label{kes} Let $V$ be a valuation ring with value group $\Gamma$. Then there exists an ultrafilter ${\sU}$ on a set $U$ such that any system of polynomial equations
$(g_i((X_j)_{j\in J})_{i\in I}$ with $\card(I)\leq  \card(U)$ in variables $(X_j)_{j\in J}$ with coefficients in  the ultrapower ${\tilde V}=\Pi_{\sU}V$ has a solution in  ${\tilde V}$ if and only if all its 
finite subsystems do.
\end{Proposition}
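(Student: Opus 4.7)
The plan is to translate the statement into the realization of a partial type in an ultrapower and appeal directly to the cited Chang--Keisler theorem. Throughout I may assume $\tau$ is infinite, since otherwise $\Gamma = 0$, $V$ is a field, and the statement is vacuous. The first step is to choose, via (the standard consequence of) \cite[Theorem 6.1.4]{CK}, a countably incomplete $\tau^+$-good ultrafilter $\mathcal{U}$ on a set $U$ of cardinality $\tau$. For such an ultrafilter, the ultrapower $\tilde V = \prod_{\mathcal{U}} V$ is $\tau^+$-saturated as a structure in the language $\mathcal{L}$ of rings: every partial $\mathcal{L}$-type over a parameter set of cardinality $\leq \tau$ that is finitely satisfiable in $\tilde V$ is realized in $\tilde V$.

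The second step is to repackage the hypothesis of the proposition as finite satisfiability of a partial type over parameters of size $\leq \tau$. Given a system $(g_i)_{i \in I}$ with $|I| \leq \tau$, one may restrict $J$ to the variables actually appearing in some equation, so that $|J| \leq |I| \cdot \aleph_0 \leq \tau$. Let $A \subset \tilde V$ be the set of all coefficients occurring in the polynomials $g_i$; since each $g_i$ has only finitely many coefficients, $|A| \leq |I| \cdot \aleph_0 \leq \tau$. The family $\Sigma(X) = \{g_i(X) = 0 : i \in I\}$ is then a set of atomic $\mathcal{L}(A)$-formulas in the variables $X = (X_j)_{j \in J}$, and the hypothesis that every finite subsystem has a solution in $\tilde V$ is precisely the assertion that $\Sigma(X)$ is finitely satisfiable in $\tilde V$.

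By $\tau^+$-saturation of $\tilde V$, the type $\Sigma(X)$ is realized in $\tilde V$, and a realization is exactly a simultaneous solution of the full system; this establishes the nontrivial direction, while the converse is immediate. The main substantive ingredient is therefore the existence of a $\tau^+$-good countably incomplete ultrafilter on a set of size $\tau$ together with the consequent $\tau^+$-saturation of the ultrapower, both packaged into \cite[Theorem 6.1.4]{CK}; once these are granted, the argument is purely a translation. The only point requiring care is the cardinality bookkeeping that keeps both the parameter set $A$ and the index set $J$ of size $\leq \tau$, and this is straightforward given that each individual polynomial $g_i$ involves only finitely many coefficients and variables.
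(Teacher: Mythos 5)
Your proof is correct and is precisely the argument the paper outsources to \cite[Proposition A.6]{P} and \cite[Theorem 6.1.4]{CK}: take a countably incomplete $\tau^+$-good ultrafilter on a set of size $\tau$, use the resulting $\tau^+$-saturation of the ultrapower, and observe that the given polynomial system, after discarding unused variables, is a finitely satisfiable partial type in at most $\tau$ variables over at most $\tau$ parameters, hence realized. Since the paper states the proposition without proof and merely cites these sources, there is no divergence in approach.
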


\begin{Theorem} \label{bt1}
For a Henselian valuation ring $V$ with residue field $k$ of characteristic $p>0$, there is a countable sequence of ultrafilters ${\sU}_1, {\sU}_2, \dots$ on some respective sets $U_1, U_2, \dots$ for which the valuation rings $\{V_n\}_{n \geq 0}$ defined inductively by $V_0 := V$ and $V_{n + 1} := \prod_{{\sU}_{n + 1}} V_{n}$ for $n \geq 1$ 
are such that for the valuation ring 
$ \wt{V} := \varinjlim_{n \geq 0} V_n$ there exists a lifting  $  \wt{t} : \wt{k} \to \wt{V}$,
where  $\wt{k}$ is the residue field  of $\wt{V}$.
\end{Theorem}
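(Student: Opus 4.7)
The plan is to mimic the iterated-ultrapower construction behind Theorem \ref{bt}, now aimed at producing a lifting of the residue field rather than a cross-section of the value group. The underlying arithmetic input, invoked already in the proof of Corollary \ref{c1}, is the following: if $W$ is any Henselian valuation ring with residue field $l$ of characteristic $p>0$, then every finitely generated subfield $l_0\subseteq l$ admits a ring homomorphism $l_0\to W$ sectioning the residue map. Indeed, since ${\bf F}_p$ is perfect, $l_0/{\bf F}_p$ is separably generated; one lifts a separating transcendence basis to arbitrary preimages in $W$ and then lifts the remaining separable algebraic generator by Hensel's lemma. Since being Henselian is first-order, every $V_n$ and hence $\wt V$ is Henselian, so this preliminary lifting is available at every stage.

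Next, choose the ultrafilters $\sU_{n+1}$ inductively by applying Proposition \ref{kes} (interpreted in a language incorporating the valuation) so that each $V_{n+1}$ is $|k_n|^+$-saturated, where $k_n$ denotes the residue field of $V_n$. A standard cofinality argument then gives that $\wt V$ realizes every type, of cardinality at most $|\wt k|=\sup_n |k_n|$, whose finite sub-types are realized at some finite stage.

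The lifting $\wt t$ will be produced as a realization in $\wt V$ of the type $\Sigma$ in variables $(x_a)_{a\in\wt k}$ consisting of the valuation-theoretic condition $x_a-\tilde a\in\mm_{\wt V}$ (with $\tilde a\in\wt V$ any chosen preimage of $a$) together with the ring-theoretic identities $x_0=0$, $x_1=1$, $x_a+x_b=x_{a+b}$, and $x_ax_b=x_{ab}$ for all $a,b\in\wt k$. For any finite $F\subseteq\wt k$, the preliminary observation lifts the finitely generated subfield ${\bf F}_p(F)\subseteq\wt k$ to $\wt V$, and this lifting realizes the finite sub-type of $\Sigma$ indexed by $F$; hence $\Sigma$ is finitely satisfiable. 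The saturation then yields a global realization, which is the desired ring homomorphism $\wt t\:\wt k\to\wt V$ sectioning the residue map of $\wt V$.

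The main technical obstacle is that Proposition \ref{kes} is phrased for polynomial systems in $\wt V$, while our type $\Sigma$ mixes polynomial equations with the valuation-theoretic congruences $x_a-\tilde a\in\mm_{\wt V}$. One remedy is to encode each congruence as a divisibility-type polynomial condition after introducing auxiliary variables, exploiting that $\mm_{\wt V}$ is quantifier-free definable in any chosen nonzero element of $\mm$; a cleaner alternative is to invoke the Keisler--Chang construction of good ultrafilters directly in the language of valued rings, the route implicit in the proof of Theorem \ref{bt}. Either way, the saturation upgrade is standard model theory, and granting it, the entire argument reduces to Hensel's lemma together with the saturated realization of $\Sigma$.
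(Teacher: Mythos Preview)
Your strategy is essentially the paper's: lift finitely generated subfields of the residue field via Hensel (they are separably generated over ${\bf F}_p$), then pass to iterated ultrapowers and use saturation to glue. The paper phrases the gluing as an explicit stage-by-stage construction of compatible maps $t_n\colon k_n\to V_{n+1}$, each obtained by solving in $V_{n+1}$ a system of size $|k_n|$ via Proposition~\ref{kes}, and then setting $\wt t=\varinjlim t_n$. Your ``standard cofinality argument'' must unpack to exactly this inductive construction, since the bare assertion that $\wt V$ realizes every finitely satisfiable type of size $|\wt k|$ is not true in general for the union of an $\omega$-chain of increasingly saturated models. In the stage-by-stage version the extension step works because the diagonal embedding $k_n\hookrightarrow k_{n+1}$ is elementary, hence a separable field extension, so any finitely generated extension of an already-constructed partial lifting again lifts by Hensel; saturation of $V_{n+2}$ then realizes the full extension. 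Your observation that the congruence conditions $x_a-\tilde a\in\mm$ are not polynomial equations and require an extra encoding (auxiliary variables, or working directly in the language of valued fields) is well taken; the paper's system $G$ glosses over the same point.
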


\begin{proof}
Let  $\mathcal{E}$ be the family of all subfields  $k'$ of $k$ which are field extensions of finite type over ${\bf F}_p$. For such a subfield $k'$ there exists a lifting $t_{k'}:k' \to V$ as we have seen above since $k'/{\bf F}_p$ is separably generated. We have $k'= {\bf F}_p({\bar y}_{k'})$ for a finite system of elements ${\bar y}_{k'}$ from $k$. Let $G_{k'}$ be the polynomial equations from ${\bf F}_p[Y]$ satisfied by  
${\bar y}_{k'}$ (certainly $G_{k'}$ could contain just one equation if we arrange properly ${\bar y}_{k'}$ but it has no importance for us). The existence of $t_{k'}$ says that $G_{k'}$ has a solution $y_{k'}$   in $V$.
If $k'_1\subset k'_2$ for some fields $k'_i\in \mathcal{E}$, $i=1,2$ then the elements ${\bar y}_{k'_1},{\bar y}_{k'_2}$ satisfy some polynomial equations $G_{k'_1k'_2}$ from ${\bf F}_p[Y]$,  which include $G_{k'_i}$ for $i=1,2$. The existence of $t_{k'_2}$ assure us that $G_{k'_1k'_2}$ has a solution in $V$. Let $G$ be the union of all possible  $G_{k'_1}$, $G_{k'_1k'_2}$ when $k'_1\subset k'_2$ run over $\mathcal{E}$.

Let  $U_1$ be a set with card $U_1>$card $k$ and ${\sU}_1$ an ultrafilter given by Proposition \ref{kes}. Then $G$ has a solution in  $V_1= \prod_{{\sU}_1} V$ because each finite subsystem of $G$  already  has a solution in $V$ 
given by the lifting of  a big enough $k'\in \mathcal{E}$. So we get a lifting $t_k$ of $k$ to $V_1$.

Repeating this procedure we  find  sets $(U_n)$ and  ultrafilters $({\sU}_n)$ and define  $V_n$ and $k_n$, so that $k_{n + 1} \cong \prod_{{\sU}_{n + 1}} k_{n}$ and $V_{n + 1} = \prod_{{\sU}_{n + 1}} V_{n }$  with
\[
\tst \wt{k} \cong \varinjlim_{n \geq 0} k_n \ \ \mbox{and} \ \ \wt{V} \cong \varinjlim_{n \geq 0} V_n.
\]
We  build the ultrafilters ${\sU}_n$ one by one  in such a way that the desired map
\[
\tst \wt{t} \colon \wt{k} \to \wt{V} \ \ \mbox{is the limit of compatible maps } \ \ t_n \colon k_n \to V_{n + 1}.
\]
\hfill\ \end{proof}

\begin{Remark} \label{r} The proofs of Theorems \ref{bt}, \ref{bt1} are similar and in fact we can find $\wt{V}$ such that there exist a cross-section $\wt{s}:\wt{\Gamma}\to \wt{K}^*$ and a lifting $\wt{t}:\wt{k}\to \wt{V}$ at the same time.
\end{Remark}

\begin{Theorem}\label{t2}
Let $V$ be a  valuation ring  of  characteristic $p>0$. 
Let  $F$ be a field contained in $V$ such the residue field $k$ of $V$ is separable over $F$ (for example ${\bf F}_p$).  
Assume that $V$ is Henselian.
 Then  $V$ is a filtered
colimit of  complete intersection $F$-algebras.
\end{Theorem}

\begin{proof}  Let   $\Gamma$ be the  value group of $V$.
  Let $E$ be a finitely generated $ F$-subalgebra (in fact finitely presented by \cite[Theorem 4]{Na})  of $V$ and $w:E\to V$ the inclusion.   We will show that $w$ factors through a complete intersection  $V$-algebra. It is enough to replace $V$ by the intersection of $V$ with the fraction field $K_E$ of $E$. Thus we may assume that $\Gamma$ is countably generated and $k$ is countably generated over $ F$ by \cite[Corollary 5.2]{APZ}. Consider  $U_i$, $\mathcal{U}_i$, $\wt{V}$, $\wt{k}$, $\wt{\Gamma}$, $\wt{s}$, $\wt{t}$ given by Theorems \ref{bt}, \ref{bt1}  for  $V$.  Thus we may assume $\wt{k}\subset \wt{V}$ (see Theorem \ref{bt1} and Remark \ref{r}) and so it is enough to apply Theorem \ref{t1}. As usual $\wt{V}$ is an immediate extension of the valuation ring
   $\wt{V}_0=\wt{V}\cap \wt{k}(\wt{s}(\wt{\Gamma}))$,  which is a filtered  union of localizations of polynomial algebras over $\wt{k}$.
  
    Then the map $w' :\wt{k}\otimes_F E\to \wt{V}$   factors through a complete intersection $\wt{k}$-algebra and so the map $E\to \wt{V}$ induced by $w$ factors through a complete intersection $F$-algebra $D$ since $\wt{k}/k$ and $k/F$  are separable.

   Thus $w$  factors through $D$ too    because all finite systems of polynomial equations which have a solution in $\tilde V$ must have one in $V$.
 The proof ends applying Remark \ref{s}.
\hfill\ \end{proof}

\vskip 0.5 cm

\end{document}